\newcommand{\R}{\mathbb{R}}
\newcommand{\Z}{\mathbb{Z}} 
\theoremstyle{plain}
\newtheorem{theorem}{Theorem}
\newtheorem{lemma}{Lemma}
\theoremstyle{definition}
\newtheorem{definition}{Definition}
\begin{document}

\begin{title}
	{\Large\bf An upper bound on the per-tile entropy of ribbon tilings}
\end{title}
\renewcommand\Affilfont{\itshape\small}
\author[1]{Simon Blackburn}
\author[2]{Yinsong Chen}
\author[3]{Vladislav Kargin}
\affil[1]{Department of Mathematics, Royal Holloway University of London \protect\\ Egham, Surrey TW20 0EX, United Kingdom.
\protect\\ s.blackburn@rhul.ac.uk}
\affil[2]{Department of Mathematics and Statistics, Binghamton University \protect\\ Binghamton, New York, U.S.A.
\protect\\ ychen276@Binghamton.edu}
\affil[3]{Department of Mathematics and Statistics, Binghamton University \protect\\ Binghamton, New York, U.S.A.
\protect\\ vkargin@Binghamton.edu}
\def\RunningHead{}

\maketitle

\begin{abstract}
This paper considers $n$-ribbon tilings of general regions and their per-tile entropy (the binary logarithm of the number of tilings divided by the number of tiles). We show that the per-tile entropy is bounded above by $\log_2 n$. This bound improves the best previously known bounds of $n-1$ for general regions, and the asymptotic upper bound of $\log_2 (en)$ for growing rectangles, due to Chen and Kargin.
\end{abstract}

\section{Introduction}
A \emph{region} $R \in \R^2$ is a union of finite number of unit squares $[x, x+1] \times [y, y+1]$, with $(x, y) \in \Z^2$. Two unit squares are \emph{adjacent} if they share the same edge. We consider tilings of regions $R$ by ribbon tiles:

\begin{definition} 
	A \emph{ribbon tile} of length $n$, or an \emph{$n$-ribbon}, is a connected sequence of $n$ unit squares in $\R^2$, each of which (except the first one) comes directly above or to the right of its predecessor.
\end{definition}

For $(x,y)\in\Z^2$, we say a square $[x, x+1] \times [y, y+1]$ has \emph{level} $x+y$.
An $n$-ribbon can also be defined as a connected set of squares containing exactly one square of each of $n$ (consecutive) levels. See Figure \ref{FigTilingOrder4} for an illustration of the case $n = 4$. We call the first square of an $n$-ribbon (the square with the smallest level) the \emph{root square}, and the last square of an $n$-ribbon (the square with the largest level) the \emph{end square}.
         
\begin{figure}[H] 
	\centering
	\includegraphics[scale=0.35]{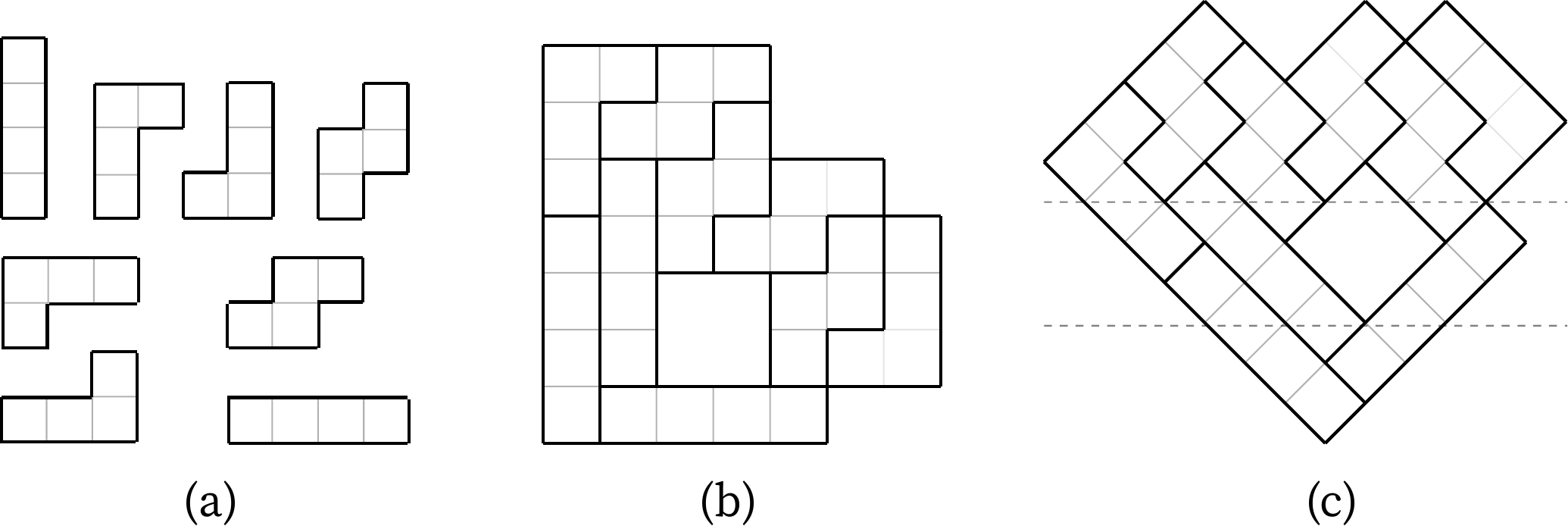} 
	\caption{$n=4$. Picture (a) shows eight types of ribbon tiles. Picture (b) shows a tiling of a region with a hole. Picture (c) is obtained by rotating picture (b) forty-five degrees counter-clockwise. If the lowest square has level $0$, the horizontal lines pass through squares of levels $2$ and $5$ respectively.}  
	\label{FigTilingOrder4}
\end{figure}

Dominoes are a particular case of ribbon tiles with $n = 2$, and have been extensively studied. Ribbon tilings of Young tableaux, known as rim hook tableaux, have also had attention as part of the representation theory of the symmetric group. (See \citep{stanley2002rank},  \citep{borodin1999longest}, \citep{fomin1997rim}, \citep{james1987representation}(Chapter 1) and \citep{stanton1985schensted}.) The $n$-ribbon tilings with $n > 2$ for more general regions were first studied in \cite{pak2000ribbon}.  

Typical questions one might ask about tilings are: Does a tiling of a region $R$ exist? If so, can these tilings be enumerated?

The existence question for $n$-ribbon tilings of regions that are simply connected was studied in \cite{sheffield2002ribbon}, who provided a remarkable algorithm, linear in the area of a region, that checks whether the region has an $n$-ribbon tiling. The existence question for general (not necessarily simply connected) regions is still open, but might be hard: \cite{akagi2020hard} showed that for general regions, the existence of tilings by $180$-trominoes is an NP-complete decision problem.

In this paper, we focus on the enumeration question. For domino tilings, this question has been widely studied. The papers \cite{kasteleyn1961statistics} and \cite{temperley1961dimer} provide a formula for the number of tilings of rectangular regions using a method based on calculation of Pfaffians. For rectangular regions of fixed height, \cite{klarner1980domino} used another method (a difference equation method) for enumeration, and \cite{stanley1985dimer} studied the properties of the generating function. For regions that are Aztec diamonds, the enumeration problem was solved by \cite{elkies1992alternating}.

In contrast, much less is known about the enumeration of $n$-ribbon tilings when $n > 2$. For the rest of this introduction, we express any enumeration results in terms of the following quantity.

\begin{definition} 
	The \emph{per-tile entropy} of the $n$-ribbon tilings of a region $R$ is the binary logarithm of the number of $n$-ribbon tilings divided by the number of ribbons in each tiling, that is,
$$
	\mathrm{Ent}_n(R) = \frac{\log_2(\mathcal{T}_n(R))}{A(R)/n}
$$
where $\mathcal{T}_n (R)$ is the number of $n$-ribbon tilings of the region $R$, and $A(R)$ is the area of $R$.
\end{definition}
So the per-tile entropy expresses the average number of possibilities for the position and type of a tile in an $n$-ribbon tiling of $R$.

For $n$-ribbon tilings, it is not difficult to calculate that the number of tilings of an $n\times n$ square region is $n!$. (See \cite{kargin2023enumeration} Lemma 1.) If $n \to \infty$, then the per-tile entropy is $\log_2(n!)/n \sim \log_2 n - \log_2 e$. In \cite{alexandersson2018enumeration}, an exact formula for the number of $n$-ribbon tilings of an $n \times 2 n$ rectangle was proved. The formula implies that for $n \to \infty$, the per-tile entropy is asymptotically equal to  $\log_2 n - \log_2 e + 1 - \frac{1}{2} \log_2 C$ where $C \approx 2.4969$ is a constant associated to Bessel functions (See formula (0.9) in \cite{kaufmann1996higher}). Observe that changing the $n \times n$ square to the $n \times 2 n$  rectangle leads to a significant increase (namely $1 - \frac{1}{2} \log_2 C \approx 0.3399$) in the asymptotic per-tile entropy.

In the above examples, $n$ grows with the size of the region. Upper and lower bounds for $\mathrm{Ent}_n(R)$ are known for various classes of regions when $n$ is fixed. For example, \cite{chen2023number} study tilings of \emph{strips}, which are rectangles of fixed height equal to $n$. It is shown that the per-tile entropy for strips is bounded above by $\log_2 n$. In \cite{kargin2023enumeration}, exact enumerations of $n$-ribbon tilings are provided for two classes of regions, \emph{generalized Aztec diamonds} and \emph{stairs}. The per-tile entropy of generalized Aztec diamonds equals $1/2$. When $n$ is odd, the per-tile entropy of stairs converges  to $\log_2(n + 1) - 1$ as the size of the region grows. By considering a generalisation of stairs, \cite{bevan2024} shows that (for any positive integer $n$) there are regions whose per-tile entropy converges to $\log\lceil n/2\rceil$.  \cite{kargin2023enumeration} also considered the case of rectangular regions where the ribbon length is fixed and both the height and width of the rectangle go to infinity). In this situation, it was shown that the per-tile entropy $\mathrm{Ent}_n(\mathrm{rectanges})$ converges to a limit $\mu^{(n)}$ that satisfies the inequality
$$
\log_2 n  - \log_2 e 
+ n^{-1}\Big(
\frac{1}{2} \log_2 n + \log_2 \sqrt{2\pi}
\Big) \leq \mu^{(n)}\leq \log_2 n + \log_2 e.
$$

The main result of this paper is as follows. 

\begin{theorem}	\label{ThmUpBound}
For every finite region $R$ and every $n \geq 2$, the per-tile entropy of $R$ satisfies the inequality
	\begin{equation}	\label{equ_bound}
		\mathrm{Ent}_n(R) \leq \log_2 n.
	\end{equation}
\end{theorem}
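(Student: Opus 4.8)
The plan is to bound $\mathcal{T}_n(R)$ by counting tilings via a step-by-step "peeling" or "building" process, where at each stage we add one ribbon and control the number of choices available. The natural structure to exploit is the level decomposition: each $n$-ribbon occupies exactly one square on each of $n$ consecutive levels, so a tiling of $R$ induces, on each level $\ell$, a partition of the level-$\ell$ squares of $R$ into the "slots" coming from the various tiles. I would try to encode a tiling by recording, for each level from bottom to top, some bounded amount of combinatorial data describing how the tiles crossing that level are arranged relative to the tiles crossing the previous level — in the spirit of Sheffield's "processions" or transfer-matrix style arguments. If each square of $R$ can be charged $\log_2 n$ bits of information that together determine the tiling, then $\log_2 \mathcal{T}_n(R) \le A(R)\log_2 n$, which is exactly $\mathrm{Ent}_n(R)\le \log_2 n$ after dividing by $A(R)/n$ — wait, that division introduces a factor $n$, so I actually need only $(A(R)/n)\log_2 n$ bits total, i.e.\ $\log_2 n$ bits \emph{per tile}, not per square. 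So the accounting must be: each tile, when it is placed, is chosen among at most $n$ possibilities given everything placed so far.

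Concretely, I would order the tiles of a tiling by the level of their root square (breaking ties by, say, the horizontal coordinate), and argue that once the first $k$ tiles in this order are fixed, the $(k+1)$-st tile is determined by at most $\log_2 n$ bits of additional data. The key geometric fact to establish is that the root square of the next tile is forced (it must be the minimal uncovered square in the appropriate ordering — analogous to how, in domino or ribbon tilings, the lowest-leftmost empty cell must be the root of some tile), so the only freedom is the \emph{shape} of that ribbon: a ribbon of length $n$ is specified by a sequence of $n-1$ moves each "up" or "right", giving at most $2^{n-1}$ shapes a priori. That bound is too weak ($n-1$ bits, the old bound), so the heart of the argument must be showing that, given the previously placed tiles and the region boundary, the number of \emph{admissible} shapes for the next ribbon is at most $n$, not $2^{n-1}$. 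I expect this to be the main obstacle.

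The mechanism I would pursue for that bound: after placing tiles $1,\dots,k$, consider the "profile" of uncovered squares near the forced root square $s$. As we try to extend the ribbon from $s$, at each step the choice "up vs.\ right" is often forced — either because one of the two neighbours is already covered, or lies outside $R$, or because choosing it would strand an uncoverable pocket. One would like to show that genuine binary branchings can happen at most $\log_2 n$ times, or more precisely that the admissible shapes, viewed as lattice paths, form a set of size $\le n$ because they are constrained to a "staircase" band of width controlled by $n$. An alternative, possibly cleaner route: set up a transfer matrix / row-by-row recursion where the state is the pattern of partially-built ribbons crossing a level, show the number of states and the branching are such that the per-level growth rate is at most $n^{(\text{squares in that level})}$ raised to the right power — but for general (non-simply-connected, irregular) regions this requires care that the boundary only ever \emph{reduces} the count.

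A robust fallback, and perhaps the actual intended proof, is an injection argument: exhibit an injection from the set of $n$-ribbon tilings of $R$ into a set of size $n^{A(R)/n}$, for instance into the set of functions from tiles to $\{1,\dots,n\}$, by reading off at each tile a single label in $\{1,\dots,n\}$ (such as which level's square within the ribbon plays a distinguished role, or an index recording the branching choice in a canonical reconstruction order) and proving reconstructibility. I would aim to make the reconstruction algorithm the backbone: process squares of $R$ in increasing level-then-coordinate order; when we hit the root of a not-yet-placed tile, use the next label to select its shape; verify this uses exactly one label per tile and always succeeds. The crux, again, is proving the label set can be taken of size $n$ — i.e.\ that the canonical shape-selection never needs more than $n$ options — which I anticipate hinges on a structural lemma (likely borrowed or adapted from Sheffield or Pak) saying that valid ribbon extensions are totally ordered or otherwise tightly constrained by the squares already tiled at lower levels.
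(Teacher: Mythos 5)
Your plan hinges on the claim that, in a canonical reconstruction order, each new ribbon is ``chosen among at most $n$ possibilities given everything placed so far'' --- equivalently, that a single label from $\{1,\dots,n\}$ per tile suffices to record the branching. You correctly identify this as the crux, but the claim is false, so the proposal cannot be completed as structured. Counterexample: take $n=3$ and $R$ the $3\times 3$ square. The unique level-$0$ square $(0,0)$ is the forced root of the first tile, and all four shapes of a $3$-ribbon rooted there (right--right, right--up, up--right, up--up) extend to complete tilings of $R$ (e.g.\ after removing the left column $\{(0,0),(0,1),(0,2)\}$ the remaining $2\times 3$ block is tiled by $\{(1,0),(2,0),(2,1)\}$ and $\{(1,1),(1,2),(2,2)\}$, and similarly for the other three shapes). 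So the very first tile has $4>n$ admissible choices and cannot be encoded by a label in $\{1,2,3\}$. The same failure occurs in the ``root squares are the free choices'' formulation: for $n=2$ and $R$ the $3\times3$ square minus the corner $(0,0)$, the level-$1$ roots are forced, yet the unique level-$2$ root ranges over all three level-$2$ squares among the four tilings, again exceeding $n$.

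What the paper does instead is precisely \emph{not} a per-tile bound. It first shows (Lemma~\ref{LemRootSquareDetermination}) that a tiling is determined by the multiset of root squares, so counting tilings reduces to counting admissible root configurations, built level by level. Then (Lemma~\ref{LemThreeForms}) it shows that once the roots below level $l$ are fixed, the $\tau_l$ roots at level $l$ lie one each in \emph{pairwise disjoint} subsets $A^{(l)}_1,\dots,A^{(l)}_{\tau_l}$ of the level-$l$ squares; hence $\sum_i |A^{(l)}_i|\leq|\mathbf{S}^{(l)}|$, and summing over levels the total size of all $A(R)/n$ choice sets is at most $A(R)$. The bound $\mathcal{T}_n(R)\leq n^{A(R)/n}$ then follows from the AM--GM inequality applied to the product $\prod_{l,i}|A^{(l)}_i|$: the factor $n$ is only an \emph{average} number of choices per tile, and individual factors $|A^{(l)}_i|$ can and do exceed $n$. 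To rescue your approach you would need to replace the pointwise bound ``each tile has at most $n$ options'' with an aggregated constraint of this kind (disjointness of the choice sets inside a set of total size $A(R)$), which is exactly the content of the paper's two lemmas.
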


This bound significantly improves the previously best known upper bound for $\mathrm{Ent}_n(R)$ for general regions of $n-1$, established by~\cite{kargin2023enumeration}. Moreover, the examples above (in particular the enumeration results for stairs) show that the upper bound of Theorem~\ref{ThmUpBound} is close to being tight. 

The key observation used to prove the new bound is Lemma \ref{LemRootSquareDetermination}, which shows that a tiling of an arbitrary finite region is determined uniquely by the positions of the root squares of the ribbon. It follows that the number of $n$-ribbon tilings is equal to the number of valid choices of the root squares. The inequality (\ref{equ_bound}) in the main result is proved by an analysis of the constraints on possible choices of the root squares in a tiling. 

We end this introduction by stating an open problem. Let $n$ be a fixed integer with $n\geq 2$. For each integer $A=an$ that is divisible by $n$, choose a region $R_{a}$ of area $A$ with the largest number $t_a$ of $n$-ribbon tilings. Define $s_a=\log_2 t_a$, so $s_a/a$ is the largest $n$-ribbon entropy of a region of area $an$. For positive integers $a$ and $b$, we see that $s_a+s_b\leq s_{a+b}$ (because the region that is the disjoint union of $R_a$ and $R_b$ has at least $t_at_b$ ribbon tilings) and so the sequence $(s_a)$ is superadditive. Theorem~\ref{ThmUpBound} shows that $s_a/a$ is bounded above by $\log_2n$. Hence Fekete's lemma for superadditive sequences implies that $\lim_{a\rightarrow\infty} s_a/a=\lim_{a\rightarrow\infty} \mathrm{Ent}_n(R_a)$ exists (and is equal to $\sup\{s_a/a:a\in\mathbb{N}\}$). So we may define $\tau_n=\lim_{a\rightarrow\infty} s_a/a$. We ask: what is the value of $\tau_n$? From~\cite{bevan2024} and Theorem~\ref{ThmUpBound} we can see that
\[
\log_2 n-1\leq \tau_n\leq \log_2 n,
\]
but we conjecture that neither bound is tight. As a possibly more accessible problem, we ask: does $\tau_n-\log_2n$ converge to a constant as $n\rightarrow\infty$? 

In the remainder of the paper we provide some preliminaries in Section~\ref{SecPreliminaries} and prove our results in Section~\ref{SecProof}.

\section{Preliminaries}
\label{SecPreliminaries}
We say that a tile has \emph{level $l$} if its root square has level $l$. We call a square a \emph{boundary square} if it is not contained in $R$ but adjacent to at least one square in $R$. Let $\mathbf{S}^{(l)}$ and $\mathbf{B}^{(l)}$ be the set of squares and boundary squares of a region $R$ at level $l$, respectively. 

The paper \cite{sheffield2002ribbon} introduced a `\emph{left-of}' relation for both tiles and squares (including boundary squares), denoted by $\prec$. Let $S_{x,y}$ be a square (or a boundary square) $[x, x+1] \times [y, y+1]$. We say $S_{x,y} \prec S_{x',y'}$ if one of the following two conditions holds:
\begin{itemize}
	\item[(1)] $x + y = x' + y'$ and $x < x'$;
	\item[(2)] $|(x+y)-(x'+y')| = 1$, $x \leq x'$ and $y \geq y'$.
\end{itemize}
The `left-of' terminology makes sense if we rotate the region forty-five degrees counter clockwise so that square of a fixed level form horizontal lines; see Figure~\ref{FigTilingOrder4}. 

Let $T$ be a tile and $S$ be a square (or a boundary square). We write $S \prec T$ if $S \prec S'$ for some square $S' \in T$, and $T \prec S$ if $S' \prec S$ for some square $S' \in T$. If $T_1$ and $T_2$ are two tiles in a tiling, we write $T_1 \prec T_2$ if there exist a square $S_1 \in T_1$ and a square $S_2 \in T_2$ with $S_1 \prec S_2$. It is not possible that both $T_1 \prec T_2$ and $T_2 \prec T_1$ unless $T_1 = T_2$. 

For a region $R$ and a fixed ($n$-ribbon) tiling, let $\sigma_l$ and  $\tau_l$ be the number of squares of $R$ and tiles at level $l$, respectively. By the definition of ribbon tiling, we have 
\begin{equation}
\label{eqn:sigmatau}
	\sigma_l = \sum_{j=l-n+1}^{l}  \tau_j
\end{equation}
for each level $l$ in every tiling of $R$. Clearly, $\sigma_l$ does not depend on the tiling. Now $\tau_l=0$ if $R$ has no tiles at level $l$, and so the equation~\eqref{eqn:sigmatau} shows that $\tau_l$ does not depend on the tiling, only on the region $R$. So each tiling of $R$ has the same number of tiles in a specific level. We can order tiles of level $l$ from left to right, and denote the $i$-th tile of level $l$ in this ordering as $T^{(l)}_i$, $i = 1, \ldots , \tau_l$.

\section{Proof of Theorem \ref{ThmUpBound}}	\label{SecProof}
In this section, we will always assume that the region $R$ is rotated forty-five degrees counter clockwise and that the lowest level of a square in $R$ is $0$. We will often write tiling to mean $n$-ribbon tiling.

We first show that a tiling is uniquely determined by the positions of root squares.
\begin{lemma}	\label{LemRootSquareDetermination}
	Let $R$ be a finite region of the plane. Any $n$-ribbon tiling of $R$ is determined uniquely by the positions of the root squares of the $n$-ribbons in the tiling.
\end{lemma}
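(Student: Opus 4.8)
The plan is to show that the root squares, together with the region $R$, determine the entire tiling by reconstructing the tiles one level at a time, processing levels from lowest to highest, and within each level processing the tiles in the `left-of' order. The key structural fact I would exploit is the one already built into the preliminaries: the number $\tau_l$ of tiles at each level $l$ depends only on $R$, not on the tiling, and more importantly each ribbon occupies exactly one square on each of $n$ consecutive levels starting at its root level. So a root square at level $l$ forces the tile to have one square on each of levels $l, l+1, \dots, l+n-1$, and the whole question is which square at each of those higher levels belongs to which tile.

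First I would set up the induction. Suppose we have been given the multiset of root squares (equivalently, for each level $l$ we know which squares of $\mathbf{S}^{(l)}$ are root squares, hence we know $\tau_l$ and which $\tau_l$ squares at level $l$ are roots). Process the cells of $R$ in order of increasing level. When we reach level $l$, assume inductively that every square at levels $<l$ has already been assigned to a unique tile consistent with the tiling, and that for every tile whose root is at level $l' < l$ we know its squares on all levels up to $l-1$. The squares of $\mathbf{S}^{(l)}$ are of two kinds: the $\tau_l$ root squares at level $l$ (which start brand-new tiles), and the remaining $\sigma_l - \tau_l = \sum_{j=l-n+1}^{l-1}\tau_j$ squares, which by \eqref{eqn:sigmatau} must each be the continuation of exactly one tile rooted at some level in $\{l-n+1,\dots,l-1\}$ that has not yet reached level $l$. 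I then need a rule that matches each such non-root square at level $l$ to the correct partially-built tile.

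The matching rule should be the monotonicity of the `left-of' order $\prec$: order the tiles passing through level $l$ (both those newly rooted there and those continuing from below) from left to right, order the squares of $\mathbf{S}^{(l)}$ from left to right, and assign the $i$-th square to the $i$-th tile. To justify this I would argue (i) that in any tiling the tiles at level $l$ are linearly ordered by $\prec$ and that the square each contributes at level $l$ respects this order — this follows because consecutive squares of a ribbon differ by one level with $x\le x'$ and $y\ge y'$, so moving up a level moves a tile weakly `right', and distinct tiles cannot interleave since that would force both $T_1\prec T_2$ and $T_2\prec T_1$, contradicting the antisymmetry noted in the preliminaries; and (ii) that the left-to-right order of the tiles at level $l$ is already determined by the data we have, namely by the positions of their squares at level $l-1$ for the continuing tiles and by their root positions at level $l$ for the new ones, interleaved correctly. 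Once the assignment of level-$l$ squares to tiles is pinned down, the inductive hypothesis is restored at level $l+1$, and since $R$ is finite the process terminates having reconstructed every tile.

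The main obstacle I expect is step (ii) — proving that the interleaving of the ``new'' tiles (rooted at level $l$) with the ``old'' tiles (continuing up from below) into a single left-to-right order at level $l$ is forced, and in particular that a continuing tile's square at level $l$ lies in the correct ``slot'' relative to the newly rooted squares. This requires a careful use of the two-case definition of $\prec$ to show that if tile $T$ continues from a square at level $l-1$ and tile $T'$ is newly rooted at level $l$ with root square $S'$, then the relative $\prec$-position of $T$'s level-$(l-1)$ square and $S'$ already determines, and is consistent with, the relative position of their level-$l$ squares; equivalently, that one cannot ``slip a new root'' past a continuing tile in a way the lower levels do not already detect. I would handle this by showing the left-to-right sequence of level-$l$ squares splits canonically according to which tile is responsible, using that each continuing tile's level-$l$ square is the unique square of $\mathbf{S}^{(l)}$ that is ``directly above or to the right'' of its level-$(l-1)$ square and compatible with the $\prec$-order of all tiles — essentially a greedy / Hall-type matching argument made deterministic by the linear order. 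Everything else (the base case $l=0$, where all squares are roots; finiteness; bookkeeping with $\tau_l$) is routine.
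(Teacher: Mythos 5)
Your proposal is correct and follows essentially the same route as the paper: induct on levels and use the fact that ribbons cannot cross to show that the left-to-right matching of continuing tiles to squares at the next level is forced. The one ``main obstacle'' you flag (your step (ii), interleaving the newly rooted tiles with the continuing ones) dissolves under the paper's framing: since the root squares at each level are part of the given data $I$, one simply matches the remaining \emph{non-root} squares of level $l+1$, in left-to-right order, to the continuing tiles ordered by their level-$l$ squares, so no separate interleaving or Hall-type argument is needed.
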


\begin{proof}
	Fix a subset $I \subset R$ of squares that are the root positions of our
tiling. We show that the whole $n$-ribbon tiling of $R$ may be deduced from
$I$, by building the tiling from the low-level squares of $R$ upwards.

All the squares in $R$ of level $0$ are root squares of (distinct) $n$-ribbons. (In other words, $I$ must contain all these squares.) So we have no choice for the intersection of the
tiling with the squares of level $0$. 
Suppose now (as an inductive hypothesis) that we have found the intersection of the tiling with all squares in $R$ of level $l$ or less, for some integer
with $l \geq 0$. We claim that there is only one choice for the intersection of our tiling with the squares in $R$ of level $l+1$ or less. 

Certainly the tiling for those level $l+1$ squares that are root squares of $n$-ribbons are determined, as they are exactly the set of squares of level $l+1$ in $I$. The remaining squares of level $l+1$ are covered by the set $T$ of tiles in our tiling of level $k$ where $l-(n-1)+1 \leq k \leq l$, with one square of level $l+1$ covered for each tile in $T$. The tiles in $T$ may be ordered from left to right, by the order in which we meet them as we move rightwards along the squares of level $l$. Because no two $n$-ribbons in our tiling cross, this order does not change if we instead order $T$ by moving along the squares of level $l+1$. But this means that the tiling at level $l+1$ is determined: the $i$-th tile in $T$ covers the $i$-th square of level $l+1$ in $R \setminus I$, moving from left to right. So our claim follows.

The lemma now follows by induction on $l$.
\end{proof}

We observe that there is a straightforward upper bound on the per-tile entropy of a region $R$ of area $A$ as a direct consequence of Lemma~\ref{LemRootSquareDetermination}, which can be derived as follows. Any $n$-ribbon tiling of $R$ contains $A/n$ tiles, and so the set $I\subseteq R$ of root tiles in the tiling satisfies $|I|=A/n$. Hence the number of possibilities for $I$ is at most $\binom{A}{A/n}$ and hence (using a standard upper bound for binomial coefficients)
\[
\mathrm{Ent}_{n}(R)\leq \frac{\log_2 \binom{A}{A/n}}{A/n}\leq \frac{\log_2\, (Ae/(A/n))^{A/n}}{A/n}=\log_2 n+\log_2 e.
\]
This upper bound is weaker than Theorem~\ref{ThmUpBound}, but is still reasonable.

To prove Theorem~\ref{ThmUpBound}, we require more information on the structure of the subsets $I$ of root squares. We will proceed inductively, considering the choices for the root squares of level $l$, once the root squares at level $k$ with $k < l$ have been determined. We examine the squares in $R$ of level $\ell-1$ (where our tiling is determined) and level $l$ (where our root squares must lie), together with the boundary squares of levels $\ell-1$ and $\ell$. We show that the root squares at level $l$ must lie in certain disjoint subsets $A_i^{(l)}$ of squares in $R$ at level $l$. The subsets $A_i^{(l)}$ are determined by the root squares at level $k$ with $k < l$. We now provide an argument which defines the subsets $A_i^{(l)}$, and shows their relationship with root squares. (See Lemma~\ref{LemThreeForms} below.)

Suppose all root squares at level $k$ with $k < l$ have been determined. The proof of Lemma \ref{LemRootSquareDetermination} shows that the tiling is determined on all squares of level less than $l$. In particular, the tiles at level $l-n$ are completely determined. Let $\mathbf{D}^{(l-1)}$ be the set of end squares of the tiles at level $l-n$. Clearly, $\mathbf{D}^{(l-1)} \subseteq \mathbf{S}^{(l-1)}$, and $\mathbf{D}^{(l-1)}$ has been determined.

Let $\mathbf{W}^{(l)} = (\mathbf{S}^{(l-1)} \setminus \mathbf{D}^{(l-1)}) \cup \mathbf{S}^{(l)}$ and $\mathbf{K}^{(l)} = \mathbf{D}^{(l-1)} \cup (\mathbf{B}^{(l-1)} \cup \mathbf{B}^{(l)} )$. It is not difficult to check that $\mathbf{W}^{(l)} \cup \mathbf{K}^{(l)} = \bigcup_{k=l-1,l} (\mathbf{S}^{(k)} \cup \mathbf{B}^{(k)} )$ and $\mathbf{W}^{(l)} \cap \mathbf{K}^{(l)} = \emptyset$.
The squares in $\bigcup_{k=l-1,l} (\mathbf{S}^{(k)} \cup \mathbf{B}^{(k)} )$ are linearly ordered by the `left of' relation defined in the previous section, and so the set consists of \emph{runs} of consecutive squares in $\mathbf{W}^{(l)}$, separated by one or more squares in $\mathbf{K}^{(l)}$. (Alternatively, we can think of $\mathbf{W}^{(l)}$ being divided into equivalence classes by their `left-of' relations with $\mathbf{K}^{(l)}$ such that the squares in each equivalence class have the same `left-of' relation with every element in $\mathbf{K}^{(l)}$. Clearly, the equivalence classes are exactly the runs above.)

In order to better understand the runs in $\mathbf{W}^{(l)}$, we think of the squares in $\bigcup_{k=l-1,l} (\mathbf{S}^{(k)} \cup \mathbf{B}^{(k)} )$ as being coloured black or white: the squares in $\mathbf{K}^{(l)}$ are black and those in $\mathbf{W}^{(l)}$ are white. Up to left-right reflection, each run will have one of three forms (a), (b) or~(c): see Figure~\ref{FigThreeForms}. Let $d$ be the difference of the number of squares at level $l$ and $l-1$ in a run. Then the three forms (a), (b), (c) correspond to the case $d=-1,0,1$, respectively.

\begin{figure}[H] 
	\centering
	\includegraphics[scale=0.35]{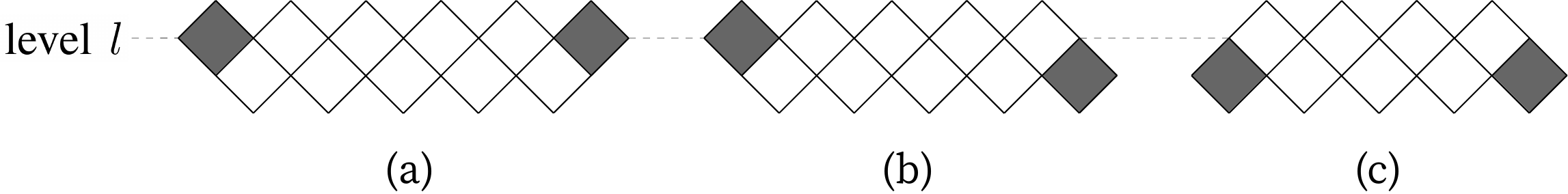} 
	\caption{Three forms of a run. Black squares are either (i) the  end squares of a tile at level $l-n$ or (ii) the boundary squares at level $l$ or $l-1$.}  
	\label{FigThreeForms}
\end{figure}

\begin{lemma}	\label{LemThreeForms}
	A run of the form (a) cannot be part of a tiling. A run of the form (b) does not contain any root squares of level $l$. A run of the form (c) contains exactly one root tile of level $l$.
\end{lemma}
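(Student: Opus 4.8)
The plan is to prove a single clean statement from which all three assertions follow at once: in any $n$-ribbon tiling of $R$ compatible with the already-determined root squares at levels below $l$, the number of root tiles of level $l$ lying in a given run equals $d$, the excess of the number of level-$l$ squares over the number of level-$(l-1)$ squares in that run. Since this number cannot be negative, a run of form (a) (where $d=-1$) cannot occur in a tiling; a run of form (b) (where $d=0$) contains no root tile of level $l$; and a run of form (c) (where $d=1$) contains exactly one.

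First I would pin down the local geometry at levels $l-1$ and $l$. Writing a level-$(l-1)$ square as $S_{x,y}$, a direct check from the definition of $\prec$ shows that, in the `left-of' order restricted to the squares of $R$ and boundary squares at levels $l-1$ and $l$, the levels strictly alternate, and the two level-$l$ squares immediately adjacent to $S_{x,y}$ in this order are exactly $S_{x,y+1}$ (the square directly above $S_{x,y}$) and $S_{x+1,y}$ (the square directly to its right). Call a tile of the tiling \emph{active} if it covers a square at level $l-1$ and a square at level $l$; equivalently, its root has level $k$ with $l-n+1\le k\le l-1$. Such a tile covers exactly one square $S$ at level $l-1$ and exactly one square $S'$ at level $l$, and by the ribbon property $S'$ is directly above or directly to the right of $S$, so $S$ and $S'$ are adjacent in the `left-of' order. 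Since $S,S'\in R$, both lie in $\mathbf{W}^{(l)}$, that is, both are white, and two adjacent white squares lie in the same run. Hence the level-$(l-1)$ and level-$l$ squares of every active tile lie in one common run.

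Next I would count within a fixed run $\rho$. A level-$(l-1)$ square of $R$ covered by a tile whose root has level $l-n$ is an end square, hence lies in $\mathbf{D}^{(l-1)}$ and is black; every other level-$(l-1)$ square of $R$ is covered by an active tile. So the white level-$(l-1)$ squares of $\rho$ are in bijection with the active tiles having a square in $\rho$, and each such tile covers a white level-$l$ square of $\rho$, with distinct tiles covering distinct squares. Conversely every white level-$l$ square of $\rho$ is a square of $R$, hence covered by a unique tile of the tiling: either an active tile (necessarily one with a square in $\rho$, by the previous paragraph) or a tile with root at level $l$, in which case the square is its root square. Therefore the white level-$l$ squares of $\rho$ split into those covered by active tiles of $\rho$ --- exactly as many as there are white level-$(l-1)$ squares of $\rho$ --- together with the root squares of level-$l$ tiles in $\rho$. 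Comparing counts yields that the number of root tiles of level $l$ in $\rho$ equals (number of white level-$l$ squares of $\rho$) $-$ (number of white level-$(l-1)$ squares of $\rho$) $= d$, which is the desired statement.

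The step I expect to be the crux is the claim that an active tile cannot straddle two runs: identifying ``directly above / directly to the right of $S$'' with ``immediately adjacent to $S$ in the `left-of' order on levels $l-1,l$'', then invoking maximality of runs. Getting this right forces care about exactly which level-$l$ squares are black (only the boundary squares) and about the alternating structure of the order on these two levels; once it is in place, the rest is bookkeeping, needing nothing about ribbons beyond the fact that the tiles of a tiling are pairwise disjoint.
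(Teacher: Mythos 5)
Your proof is correct and rests on the same key fact as the paper's: a tile whose root level lies strictly between $l-n$ and $l$ covers one white level-$(l-1)$ square and one level-$l$ square of the same run, so the number of level-$l$ root squares in a run is the excess $d$ of white level-$l$ squares over white level-$(l-1)$ squares. The paper handles the three forms case by case via a left-to-right matching of $S^{(l-1)}_i$ with $S^{(l)}_i$, but that is only an organizational difference from your unified counting identity.
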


\begin{proof}
For a fixed run, let $S^{(k)}_i$ be the $i$-th square at level $k$ with $k=l, l-1$.

First, suppose we have a run of the form (b). By the definition of our black-white colouring, it follows that the square $S^{(l-1)}_1$ is covered by a tile $T_1$ whose level is larger than $l-n$ and less than $l$. Thus, the square $S^{(l)}_1$ is also covered by the tile $T_1$. By repeating this argument, we have $S^{(l-1)}_i$ and $S^{(l)}_i$ must be covered by the same tile $T_i$ for all $i$. Therefore, a run of the form (b) does not contain any root squares at level $l$. 
	
Using similar argument for runs of the form (a), we see that the last square at level $l-1$ must be the end square of a tile at level $l-n$, so it must be black, which contradicts the definition of the form (a).
	
Finally, suppose our run has the form (c). Since the number of white squares of level $l$ exceeds that of level $l-1$ by $1$, there is exactly one root square of level $l$ in the run.
\end{proof}

From Lemma \ref{LemThreeForms}, it follows that the number of runs of the form~(c) is equal to the number of tiles at level $l$ in any tiling. Indeed, 
the root square of tile $T^{(l)}_i$ lies in the $i$th run of the form (c) (reading left to right), which we denote $E^{(l)}_i$. Let $A^{(l)}_i \subset E^{(l)}_i$ be the set of level $l$ squares in $E^{(l)}_i$. The root square of tile $T^{(l)}_i$ must be chosen from $A^{(l)}_i$, as it is of level $l$.

An example of this situation is depicted in Figure \ref{FigPartition}. In this example, the tiling is determined on squares of level $5$ or less. The set $\mathbf{W}^{(6)}$ is separated into five runs: one of form (b) and four of form (c). So there must be four level $6$ root squares; the $i$th root square is contained in the set of level $6$ squares in the $i$th run of the form~(c).

\begin{figure}[H] 
	\centering
	\includegraphics[scale=0.45]{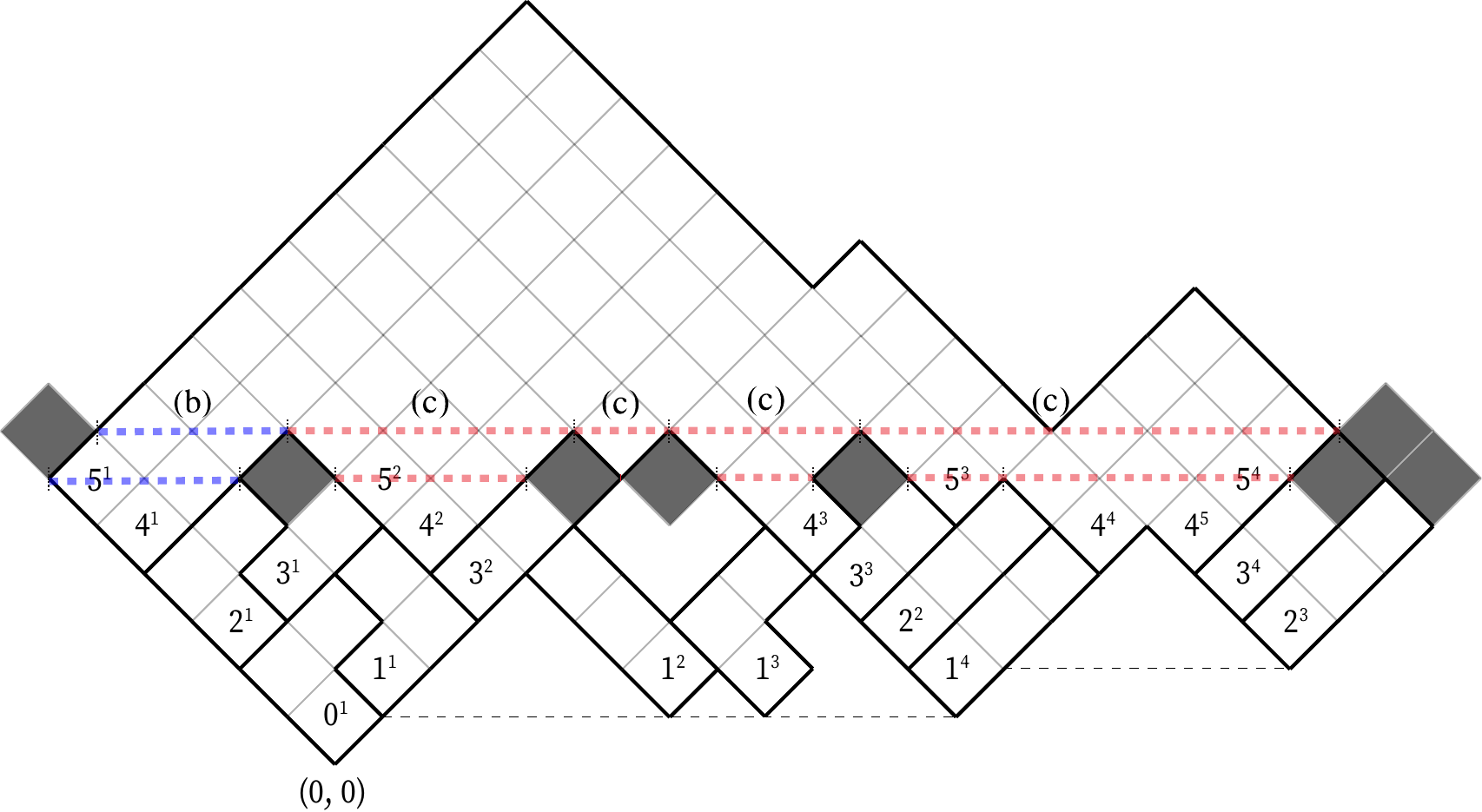} 
	\caption{An example with $n=3$ and $l=6$. The determined root squares are marked as $k^i$ for the $i$-th tile at level $k$ for $k < l$. The tiles whose level are lower than $6$ have been determined. The black squares of $\mathbf{K}^{(l)}$ split $\mathbf{W}^{(l)}$ into five runs indicated by dashed segments at levels $5$ and $6$, including one run of the form (b) and four of the form (c). Each run $E^{(l)}_i$ of the form (c) corresponds to the tile $T^{(l)}_i$, $i=1,2,3,4$.}  
	\label{FigPartition}
\end{figure}

Note that our black-white colouring, and so the sets $A^{(l)}_i$, is completely determined by the tiling on squares of level $l-1$ or less.

\begin{proof}[Proof of Theorem 1]
From Lemma \ref{LemRootSquareDetermination}, it follows that for each level $l$ we need to choose $\tau_l$ squares as root squares from the set $\mathbf{S}^{(l)}$ to construct a tiling. Let $L$ be the highest level of the tiles in $R$. We choose the root squares in each level from $0$ to $L$ in turn. Once the root squares at level $l-1$ and below are chosen, we first construct the disjoint sets $A^{(l)}_i$, $i=1,2,\cdots,\tau_{l}$. Then we choose one root square from each set $A^{(l)}_i$. All tilings arise in this way, by Lemma \ref{LemThreeForms}.

Let $\mathcal{I}_l$ be the number of possible choices of root squares at level $l$ as we construct a tiling. We have
\begin{equation*}	
	\mathcal{I}_l = \prod_{i = 1}^{ \tau_l } | A^{(l)}_i | .
\end{equation*}
It is clear that the cardinalities $| A^{(l)}_i |$ satisfy the constraint $\sum_{i = 1}^{ \tau_l } | A^{(l)}_i | \leq | \mathbf{S}^{(l)} |$. 

Let $x^{(l)}_i$, $i=1,2,\cdots,\tau_l$, be positive integers satisfying $\sum_{i = 1}^{ \tau_l } x^{(l)}_i \leq | \mathbf{S}^{(l)} |	$ for every level $l = 1,\cdots,L$. Note that the constraints on the integers $x^{(l)}_i$ do not depend on the tiling in any way, just the region $R$. Maximizing $\prod_{i = 1}^{ \tau_l } | A^{(l)}_i | $ over all possible choices of root squares whose levels are lower than $l$, we have 
$$
	\mathcal{I}_l \leq \max \Big( \prod_{i = 1}^{ \tau_l } | A^{(l)}_i | \Big)
						 \leq \max \Big( \prod_{i = 1}^{ \tau_l } x^{(l)}_i \Big)	.
$$
Since the constraints on the integers $x^{(l)}_i$ do not depend on the tiling, it follows that
$$
	\mathcal{T}_n(R) 
	\leq \prod_{l = 1}^{ L } \mathcal{I}_l 
	\leq \max \Big( \prod_{l = 1}^{ L } \prod_{i = 1}^{ \tau_l } x^{(l)}_i \Big)	.
$$

Therefore, the solution of the following maximization problem is an upper bound on $\mathcal{T}_n(R)$:
\begin{equation}		\label{EqMaxProblemLevel}
	\begin{aligned}
		\text{maximize} & \quad \prod_{l = 1}^{L}  \prod_{i = 1}^{\tau_l} x^{(l)}_i 	\\
	 \text{subject to:} & \quad \sum_{i = 1}^{ \tau_l } x^{(l)}_i  \leq | \mathbf{S}^{(l)} |  \quad \text{ for } l = 1, 2, \cdots, L,	\\
	& \quad x^{(l)}_i > 0 \quad \text{ for }l = 1, 2, \cdots, L \text{, }\,i=1,2,\ldots,\tau_l.
	\end{aligned}
\end{equation}	
For integers satisfying the constraints in this problem, we have
\[
\sum_{l=1}^{L} \sum_{i = 1}^{\tau_l } x^{(l)}_i \leq \sum_{l=1}^L  | \mathbf{S}^{(l)} |=A(R).
\]
Hence the following problem has weaker constraints than (\ref{EqMaxProblemLevel}):
\begin{equation}		\label{EqMaxProblem}
	\begin{aligned}
		\text{maximize} & \quad \prod_{l = 1}^{L}  \prod_{i = 1}^{\tau_l} x^{(l)}_i  	\\
	    \text{subject to:}& \quad \sum_{l=1}^{L} \sum_{i = 1}^{\tau_l } x^{(l)}_i  \leq A(R),\\
			  & \quad x^{(l)}_i > 0 \quad \text{ for }l = 1, 2, \cdots, L \text{, }\,i=1,2,\ldots,\tau_l .
	\end{aligned}
\end{equation}	
Since the constraints in (\ref{EqMaxProblem}) are looser than those in (\ref{EqMaxProblemLevel}), it follows that the solution of (\ref{EqMaxProblem}) is an upper bound on $\mathcal{T}_n(R)$.

We know that $\sum_{k=1}^{L} \sum_{i = 1}^{\tau_l } 1 = \frac{A(R)}{n}$ is the number of tiles in $R$. The maximum in (\ref{EqMaxProblem}) is obtained by setting all $x^{(l)}_i$ equal to $A(R)/ ( \frac{A(R)}{n} ) = n$. Then, the solution of (\ref{EqMaxProblem}) is
$$
	\max \Big( \prod_{l = 1}^{L}  \prod_{i = 1}^{\tau_l} x^{(l)}_i  \Big) = n^{\frac{A(R)}{n}}	.
$$

Hence, $\mathcal{T}_n(R) \leq
 n^{\frac{A(R)}{n}}$, and $\mathrm{Ent}_n(R) \leq \log_2 n$ as required.
\end{proof}

\end{document}